\begin{document}

\title{Approximation by sequence of operators including Dunkl Appell polynomials}

\author{Sezgin Sucu}

\institute{S. Sucu \at
              Ankara University Faculty of Science \\
              Department of Mathematics, TR-06100, Ankara, Turkey.\\
              \email{ssucu@ankara.edu.tr}
}

\date{Received: date / Accepted: date}

\maketitle

\begin{abstract}
In this article, we give a sequence of operators for producing an
approximation result. The relation between the rate of approximation of
sequence operators including Dunkl variant of exponential function with
first and second-order modulus of continuity are shown. A specific
application of sequence of operators which include Gould-Hopper type
polynomials is constructed.
\keywords{Dunkl Appell polynomials \and Rate of approximation \and Gould-Hopper type
polynomials}
\subclass{41A25 \and 41A36}
\end{abstract}

\section{Introduction}
\label{intro}

In applied science and engineering, the approximation theory plays a
significant role. The main purpose of the theory of approximation is related
to representation of functions which are difficult to comprehend of its
characteristic properties. In this area, the first result that a continuous
function which is defined on a closed and bounded interval can be
represented with the help of polynomials was expressed and proved by
Weierstrass in 1885. With this theorem proved by Weierstrass, there have
been important developments in other fields of theoretical mathematics and
applied mathematics.

The different proofs of Weierstrass approximation theorem were given by
Weierstrass, Picard, Lebesgue, Fej\'{e}r, Mittag-Leffler, Landau, de la Vall%
\'{e}e-Poussin, Bernstein and Montel using dissimilar methods. By means of
probabilistic considerations, Bernstein gave an elementary proof of this
theorem by giving an explicit construction of the polynomials. For each
function $\varphi $ which is continuous on $\left[ 0,1\right] $, the
corresponding polynomials having degree $n$ are defined%
\begin{equation*}
B_{n}\left( \varphi ;x\right) =\sum\limits_{i=0}^{n}\binom{n}{i}x^{i}\left(
1-x\right) ^{n-i}\varphi \left( \frac{i}{n}\right) \text{,\ }x\in \left[ 0,1%
\right]
\end{equation*}%
by Bernstein in 1912 \cite{1}. In 1950, the following operators%
\begin{equation}
M_{n}\left( \varphi ;x\right) =e^{-nx}\sum\limits_{i=0}^{\infty }\frac{%
\left( nx\right) ^{i}}{i!}\varphi \left( \frac{i}{n}\right)  \label{1}
\end{equation}%
was carefully examined by Szasz \cite{2} who proved that the sequence $%
\left\{ M_{n}\left( \varphi ;x\right) \right\} _{n\geq 1}$ converges to $%
\varphi \left( x\right) $ for all $x\in \left[ 0,\infty \right) $ provided
the function $\varphi $ is continuous.

Let $r$ be holomorphic in $\left\{ \omega :\left\vert \omega \right\vert <R%
\text{, }R>1\right\} $ such that $r\left( 1\right) \neq 0$ and suppose that
it has a Taylor expansion as the following form $r\left( \omega \right)
=\sum\limits_{i=0}^{\infty }a_{i}\omega ^{i}$ $\left( a_{0}\neq 0\right) $.
The Appell polynomials $q_{i}$ are defined as%
\begin{equation}
r\left( \omega \right) e^{\omega x}=\sum\limits_{i=0}^{\infty }q_{i}\left(
x\right) \omega ^{i}\text{.}  \label{2}
\end{equation}%
Following operators $J_{n}\left( \varphi ;x\right) $ with $q_{i}\left(
x\right) \geq 0$ for $x\in \left[ 0,\infty \right) $,%
\begin{equation}
J_{n}\left( \varphi ;x\right) =\frac{e^{-nx}}{r\left( 1\right) }%
\sum\limits_{i=0}^{\infty }q_{i}\left( nx\right) \varphi \left( \frac{i}{n}%
\right)  \label{3}
\end{equation}%
considered in \cite{3}.

Recently, there have been significant developments in theoretical
mathematics and applied mathematics in quantum calculus. In some
applications, $q$ -Bernstein polynomials are more useful than Bernstein
polynomials. Because of the importance of $q$ -Bernstein polynomials in many
fields of application such as pattern recognition and computer aided
geometry design, these kinds of polynomials have been extensively studied in
the literature. There are many papers and several books published on $q$
-Bernstein polynomials (\cite{4},\cite{5},\cite{6},\cite{7},\cite{8}).

For $\mu +1/2>0$ extended exponential function \cite{9} defined by%
\begin{equation}
e_{\mu }\left( x\right) =\sum\limits_{i=0}^{\infty }\frac{x^{i}}{\gamma
_{\mu }\left( i\right) }\text{,}  \label{4}
\end{equation}%
where
\begin{equation}
\gamma _{\mu }\left( 2i\right) =\frac{2^{2i}i!\Gamma \left( i+\mu
+1/2\right) }{\Gamma \left( \mu +1/2\right) }\text{ \ \ and \  }\gamma _{\mu
}\left( 2i+1\right) =\frac{2^{2i+1}i!\Gamma \left( i+\mu +3/2\right) }{%
\Gamma \left( \mu +1/2\right) }  \label{5}
\end{equation}%
has been used to solve the approximation problem of the sequence of
operators defined as \cite{10}%
\begin{equation}
S_{n}\left( \varphi ;x\right) =\frac{1}{e_{\mu }\left( nx\right) }%
\sum\limits_{i=0}^{\infty }\frac{\left( nx\right) ^{i}}{\gamma _{\mu
}\left( i\right) }\varphi \left( \frac{i+2\mu \theta _{i}}{n}\right) \text{.}
\label{6}
\end{equation}%
If $\theta _{i}$ is defined as%
\begin{equation*}
\theta _{i}=\left\{
\begin{array}{ccc}
0 & ; & i\in \left\{ 0,2,4,...,2n,...\right\}  \\
1 & ; & i\in \left\{ 1,3,...,2n+1,...\right\}
\end{array}%
\right.
\end{equation*}%
then following recursion relation%
\begin{equation}
\gamma _{\mu }\left( i+1\right) =\left( i+1+2\mu \theta _{i+1}\right) \gamma
_{\mu }\left( i\right) \text{,\ }i\in
\mathbb{N}
_{0}  \label{7}
\end{equation}%
is satisfied.

There are many authors considering the exponential function based on gamma
function. For the approximation results related to $\left( \ref{4}\right) $,
we refer the reader to studies by authors (\cite{11},\cite{12},\cite{13},%
\cite{14}) who proved the quantitative convergence theorems for the
different kinds of operators sequence generated by the exponential function
based on gamma function.

The Dunkl operator $\Lambda _{\mu }$ has the form
\begin{equation}
\left( \Lambda _{\mu }\vartheta \right) \left( x\right) =\Lambda _{\mu
}\vartheta \left( x\right) =\frac{d\vartheta }{dx}+\mu \frac{\vartheta
\left( x\right) -\vartheta \left( -x\right) }{x}\text{,}  \label{8}
\end{equation}%
where $\mu $ is a real number satisfying $\mu >-1/2$ and $\vartheta $ is an
entire function \cite{9}. Repeating this process we can define
\begin{equation}
\left( \Lambda _{\mu }^{2}\vartheta \right) \left( x\right) =\Lambda _{\mu
}^{2}\vartheta \left( x\right) =\frac{d^{2}\vartheta }{dx^{2}}+\frac{2\mu }{x%
}\frac{d\vartheta }{dx}-\mu \frac{\vartheta \left( x\right) -\vartheta
\left( -x\right) }{x^{2}}\text{.}  \label{9}
\end{equation}

The starting point of our work here is the paper of Ben Cheikh and Gaied
\cite{15}. A polynomial set $\left( q_{i}\right) $ is called Dunkl-Appell
polynomial set if and only if for $i\in
\mathbb{N}
_{0}$%
\begin{equation*}
\Lambda _{\mu }q_{i+1}\left( x\right) =\frac{\gamma _{\mu }\left( i+1\right)
}{\gamma _{\mu }\left( i\right) }q_{i}\left( x\right) \text{.}
\end{equation*}%
The authors have proved that the following propositions are equivalent:

$\left( i\right) $ $\left( q_{i}\right) $ is a Dunkl-Appell polynomial set.

$\left( ii\right) $ The polynomials $q_{i}$ can be written as%
\begin{equation*}
q_{i}\left( x\right) =\sum_{j=0}^{i}\binom{i}{j}_{\mu }a_{i-j}x^{j}\text{, }%
\left( a_{0}\neq 0\right) \text{,}
\end{equation*}%
where the sequence $\left( a_{j}\right) $ is not depend on $i$ and $\binom{i%
}{j}_{\mu }=\frac{\gamma _{\mu }\left( i\right) }{\gamma _{\mu }\left(
j\right) \gamma _{\mu }\left( i-j\right) }$.

$\left( iii\right) $ $\left( q_{i}\right) $ is generated by%
\begin{equation*}
Q\left( t\right) e_{\mu }\left( xt\right) =\sum_{i=0}^{\infty }\frac{%
q_{i}\left( x\right) }{\gamma _{\mu }\left( i\right) }t^{i}
\end{equation*}%
where%
\begin{equation*}
Q\left( t\right) =\sum_{i=0}^{\infty }\frac{a_{i}}{\gamma _{\mu }\left(
i\right) }t^{i}\text{, }\left( a_{0}\neq 0\right) \text{.}
\end{equation*}%
The main purpose of this article is to explain the approximation problem
described below in detail:

\textbf{(P) }Find the sequence of operators generated by Dunkl-Appell
polynomial set $\left( q_{i}\right) $ defined by \cite{15}%
\begin{equation}
Q\left( t\right) e_{\mu }\left( xt\right) =\sum_{i=0}^{\infty }\frac{%
q_{i}\left( x\right) }{\gamma _{\mu }\left( i\right) }t^{i}\text{,}
\label{10}
\end{equation}%
where the coefficients $\gamma _{\mu }$ are defined as in $\left( \ref{5}%
\right) $.

For $\mu \geq 0$, the sequence of operators providing a solution of the
above-mentioned problem \textbf{(P) }is expressed as follows:

\begin{equation}
K_{n}^{\mu }\left( f;x\right) =\frac{1}{Q\left( 1\right) e_{\mu }\left(
nx\right) }\sum_{i=0}^{\infty }\frac{q_{i}\left( nx\right) }{\gamma _{\mu
}\left( i\right) }f\left( \frac{i+2\mu \theta _{i}}{n}\right) \text{.}
\label{11}
\end{equation}%
The operators $\left( \ref{11}\right) $ are the generalized form of
operators $\left( \ref{3}\right) $. When we take $\mu =0$ in $\left( \ref{11}%
\right) $, mentioned expression is obtained explicitly.

Following section gives a sequence of operators for producing an
approximation. The relation between the approximation order of operators
defined by $\left( \ref{11}\right) $ with first and second-order modulus of
continuity are shown. A specific application of sequence of operators given
in $\left( \ref{11}\right) $ which include Gould-Hopper type polynomials is
constructed.

\section{Main Results}
\label{sec:1}

The starting point for this construction is the some crucial preliminary
results that are used in this article.

$\left( \ref{11}\right) $ yields the following assertion:

\begin{lemma}
$K_{n}^{\mu }$ operators are linear and positive operators.
\end{lemma}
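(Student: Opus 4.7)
The plan is to verify the two defining properties of a positive linear operator directly from the definition \eqref{11}. Linearity is essentially by inspection: for functions $f,g$ and scalars $\alpha,\beta$ the sample value $(\alpha f+\beta g)\!\left(\tfrac{i+2\mu\theta_i}{n}\right)$ splits as $\alpha f(\cdot)+\beta g(\cdot)$, and the prefactor $\frac{1}{Q(1)e_\mu(nx)}$ together with the weights $\frac{q_i(nx)}{\gamma_\mu(i)}$ do not depend on $f$. So I would simply pull the scalars out of the series and split the sum to read off $K_n^\mu(\alpha f+\beta g;x)=\alpha K_n^\mu(f;x)+\beta K_n^\mu(g;x)$.

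For positivity I need to check that for $x\in[0,\infty)$ all the quantities in front of $f\!\left(\tfrac{i+2\mu\theta_i}{n}\right)$ are non-negative, so that $f\ge 0$ implies $K_n^\mu(f;x)\ge 0$. First, since $\mu\ge 0$, the gamma arguments in \eqref{5} are strictly positive, hence $\gamma_\mu(i)>0$ for every $i\in\mathbb{N}_0$. Consequently the series \eqref{4} defining $e_\mu(nx)$ has strictly positive terms for $nx\ge 0$ (in particular $e_\mu(nx)\ge 1>0$). Next, under the standing assumption $a_i\ge 0$ with $a_0>0$ which is implicit in the Dunkl--Appell framework of \cite{15} (and is the analogue of the positivity hypothesis $q_i\ge 0$ used in \eqref{3}), the representation from item $(ii)$,
\begin{equation*}
q_i(nx)=\sum_{j=0}^{i}\binom{i}{j}_{\!\mu}\,a_{i-j}(nx)^{j},
\end{equation*}
is a sum of non-negative terms since $\binom{i}{j}_\mu=\frac{\gamma_\mu(i)}{\gamma_\mu(j)\gamma_\mu(i-j)}>0$ and $nx\ge 0$. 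In the same way $Q(1)=\sum_{i\ge 0}\frac{a_i}{\gamma_\mu(i)}>0$.

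Putting these observations together gives $\frac{1}{Q(1)e_\mu(nx)}\cdot\frac{q_i(nx)}{\gamma_\mu(i)}\ge 0$ for each $i$ and each $x\ge 0$, which delivers positivity. I do not anticipate a genuine obstacle here; the only point requiring a word of care is justifying $q_i(nx)\ge 0$, which is why I would invoke characterization $(ii)$ explicitly rather than treat it as obvious. Everything else is termwise manipulation of absolutely convergent series, so no convergence subtlety needs to be addressed beyond noting that $e_\mu$ is entire.
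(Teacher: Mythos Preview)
Your proposal is correct. The paper itself gives no proof of this lemma at all; it merely records it as an immediate consequence of \eqref{11}, so your argument is strictly more detailed than what the paper supplies. Linearity is indeed trivial, and your verification of positivity via characterization $(ii)$ is the natural route.

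One small remark on attribution: the hypothesis $a_i\ge 0$ (equivalently $q_i(x)\ge 0$ for $x\ge 0$) is not intrinsic to the Dunkl--Appell framework of \cite{15}; rather it is a tacit additional assumption the present paper makes, exactly parallel to the explicit hypothesis ``$q_i(x)\ge 0$ for $x\in[0,\infty)$'' imposed when defining the Jakimovski--Leviatan operators \eqref{3}. You already flag this analogy, so your reasoning is sound; just be aware that you are supplying a standing hypothesis that the paper never states for $K_n^\mu$ but clearly needs, rather than quoting something contained in \cite{15}. A slightly cleaner phrasing would be to assume $q_i(x)\ge 0$ on $[0,\infty)$ and $Q(1)>0$ directly, mirroring \eqref{3}, and note that the sufficient condition $a_i\ge 0$, $a_0>0$ guarantees both via $(ii)$.
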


\begin{lemma}
Let $\left\{ K_{n}^{\mu }\right\} _{n\geq 1}$ be the sequence of operators
defined by $\left( \ref{11}\right) $. Then there are following assertions:%
\begin{eqnarray}
K_{n}^{\mu }\left( 1;x\right) &=&1\text{,}  \notag \\
K_{n}^{\mu }\left( \xi ;x\right) &=&x+\frac{\left( e_{\mu }\left( nx\right)
-e_{\mu }\left( -nx\right) \right) Q^{^{\prime }}\left( 1\right) +e_{\mu
}\left( -nx\right) \left( \Lambda _{\mu }Q\right) \left( 1\right) }{Q\left(
1\right) ne_{\mu }\left( nx\right) }\text{,}  \notag \\
K_{n}^{\mu }\left( \xi ^{2};x\right) &=&x^{2}+\frac{e_{\mu }\left( nx\right)
\left( 2Q^{^{\prime }}\left( 1\right) +Q\left( 1\right) \right) +2\mu
Q\left( -1\right) e_{\mu }\left( -nx\right) }{Q\left( 1\right) ne_{\mu
}\left( nx\right) }x  \notag \\
&&+\frac{\left( \Lambda _{\mu }Q\right) \left( 1\right) e_{\mu }\left(
-nx\right) }{Q\left( 1\right) n^{2}e_{\mu }\left( nx\right) }  \notag \\
&&+\frac{\left[ 2Q^{^{^{\prime \prime }}}\left( 1\right) -\left( \Lambda
_{\mu }Q\right) ^{^{\prime }}\left( 1\right) -\left( \Lambda _{\mu
}Q^{^{\prime }}\right) \left( 1\right) +Q^{^{\prime }}\left( 1\right) -2\mu
Q^{^{\prime }}\left( -1\right) \right] \left( e_{\mu }\left( nx\right)
-e_{\mu }\left( -nx\right) \right) }{Q\left( 1\right) n^{2}e_{\mu }\left(
nx\right) }  \notag \\
&&+\frac{\left( \Lambda _{\mu }^{2}Q\right) \left( 1\right) +2\mu \left(
\Lambda _{\mu }Q\right) \left( -1\right) }{Q\left( 1\right) n^{2}}\text{.}
\label{12}
\end{eqnarray}
\end{lemma}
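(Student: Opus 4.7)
My plan is to approach the three moment identities through systematic manipulation of the generating function
\[ G(x,t) := Q(t)\,e_{\mu}(xt) = \sum_{i=0}^{\infty}\frac{q_{i}(x)}{\gamma_{\mu}(i)}\, t^{i} \]
provided by the Dunkl--Appell framework of $\left(\ref{10}\right)$. The zeroth moment is immediate: setting $t=1$ gives $G(nx,1)=Q(1)\,e_{\mu}(nx)$, which cancels exactly the normalising factor in $\left(\ref{11}\right)$.

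For the first moment I would first use the recursion $\left(\ref{7}\right)$, rewritten as $(i+2\mu\theta_{i})/\gamma_{\mu}(i)=1/\gamma_{\mu}(i-1)$, to convert
\[ n\,K_{n}^{\mu}(\xi;x)\,Q(1)\,e_{\mu}(nx) \;=\; \sum_{i\geq 0}\frac{q_{i}(nx)(i+2\mu\theta_{i})}{\gamma_{\mu}(i)} \;=\; \sum_{i\geq 1}\frac{q_{i}(nx)}{\gamma_{\mu}(i-1)} \]
into $\bigl(\Lambda_{\mu}^{t}G(nx,\cdot)\bigr)(1)$, where $\Lambda_{\mu}^{t}$ denotes the Dunkl operator $\left(\ref{8}\right)$ acting in the variable $t$. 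I would then evaluate $\Lambda_{\mu}^{t}[Q(t)\,e_{\mu}(nxt)]$ directly from the definition, using the identity $\Lambda_{\mu}^{t}e_{\mu}(nxt)=nx\,e_{\mu}(nxt)$ (which is immediate from $\left(\ref{4}\right)$ and $\left(\ref{7}\right)$) together with $(\Lambda_{\mu}Q)(1)=Q'(1)+\mu\bigl(Q(1)-Q(-1)\bigr)$. Setting $t=1$ and isolating the $x$-part yields the two correction terms stated for $K_{n}^{\mu}(\xi;x)$.

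The second moment is where the main obstacle lies, and I would iterate the previous procedure. A double application of $\left(\ref{7}\right)$ gives
\[ \frac{(i+2\mu\theta_{i})(i-1+2\mu\theta_{i-1})}{\gamma_{\mu}(i)}=\frac{1}{\gamma_{\mu}(i-2)} \qquad (i\geq 2), \]
which rewrites $\sum_{i\geq 2}q_{i}(nx)/\gamma_{\mu}(i-2)$ as $\bigl((\Lambda_{\mu}^{t})^{2}G(nx,\cdot)\bigr)(1)$; this in turn is computed via the Dunkl product rule
\[ \Lambda_{\mu}^{t}(fg)(t)=f(t)(\Lambda_{\mu}g)(t)+g(-t)(\Lambda_{\mu}f)(t)+f'(t)\bigl[g(t)-g(-t)\bigr], \]
applied twice with $f=Q$ and $g(t)=e_{\mu}(nxt)$. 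To pass from this to the desired sum $\sum q_{i}(nx)(i+2\mu\theta_{i})^{2}/\gamma_{\mu}(i)$ I would exploit the parity identity $\theta_{i}+\theta_{i-1}=1$ (valid for $i\geq 1$) to decompose
\[ (i+2\mu\theta_{i})^{2}=(i+2\mu\theta_{i})(i-1+2\mu\theta_{i-1})+(i+2\mu\theta_{i})\bigl(1-2\mu+4\mu\theta_{i}\bigr), \]
so that the residual $\theta_{i}$ factors become the sole remaining source of difficulty.

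The truly delicate step is the bookkeeping that follows: the remaining parity-selecting sums are extracted by splitting the series through the symmetry $G(nx,-1)=Q(-1)\,e_{\mu}(-nx)$ and its Dunkl analogues, which is precisely where the factors $e_{\mu}(-nx)$, $Q(-1)$, $Q'(-1)$ and $(\Lambda_{\mu}Q)(-1)$ enter the formula. I then have to organise everything so that the four second-order contributions $(\Lambda_{\mu}^{2}Q)(1)$, $(\Lambda_{\mu}Q)'(1)$, $(\Lambda_{\mu}Q')(1)$ and $Q''(1)$ combine in the precise pattern displayed in $\left(\ref{12}\right)$; all termwise manipulations are legitimate thanks to the absolute convergence of $G(x,t)$ on $|t|<R$ guaranteed by the holomorphy assumption $R>1$.
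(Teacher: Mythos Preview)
Your approach is correct and coincides with the paper's: both proofs apply the Dunkl operator $\Lambda_{\mu}$ in the $t$-variable to the generating identity $Q(t)e_{\mu}(xt)=\sum_{i}q_{i}(x)t^{i}/\gamma_{\mu}(i)$ via the product rule $\left(\ref{14}\right)$, once for $K_{n}^{\mu}(\xi;x)$ and twice to obtain $\sum_{i}q_{i}(nx)(i+2\mu\theta_{i})(i-1+2\mu\theta_{i-1})/\gamma_{\mu}(i)$, and then combine this with the first-moment identity to reach $K_{n}^{\mu}(\xi^{2};x)$. The only difference is expository: you make explicit the parity decomposition $(i+2\mu\theta_{i})^{2}=(i+2\mu\theta_{i})(i-1+2\mu\theta_{i-1})+(i+2\mu\theta_{i})(1-2\mu+4\mu\theta_{i})$ and the evaluation at $t=-1$ that generate the $Q(-1)$, $Q'(-1)$, $(\Lambda_{\mu}Q)(-1)$ contributions, a step the paper compresses into the sentence ``these equalities enable us to obtain the proof of lemma.''
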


\begin{proof}
The proof of first relation$\ $in $\left( \ref{12}\right) $ is quite simple,
because it is enough to replace $x$ with $nx$ and $t$ with $1$ in the
following statement%
\begin{equation*}
\sum_{i=0}^{\infty }\frac{q_{i}\left( x\right) }{\gamma _{\mu }\left(
i\right) }t^{i}=Q\left( t\right) e_{\mu }\left( xt\right) \text{.}
\end{equation*}

To obtain the required second statement in $\left( \ref{12}\right) $ it
suffices to apply the Dunkl operator $\Lambda _{\mu }$ to the both side of
above equality. By reason of the fact that
\begin{equation}
\Lambda _{\mu }\left( t^{i}\right) :=\Lambda _{\mu ,t}\left( t^{i}\right) =%
\frac{\gamma _{\mu }\left( i\right) }{\gamma _{\mu }\left( i-1\right) }%
t^{i-1}\text{ \ and \ }\Lambda _{\mu }\left( e_{\mu }\left( xt\right)
\right) :=\Lambda _{\mu ,t}\left( e_{\mu }\left( xt\right) \right) =xe_{\mu
}\left( xt\right)  \label{13}
\end{equation}%
and the following product rule
\begin{equation}
\left( \Lambda _{\mu }\left( \vartheta \zeta \right) \right) \left( t\right)
=\vartheta \left( t\right) \Lambda _{\mu }\zeta \left( t\right) +\zeta
\left( -t\right) \Lambda _{\mu }\vartheta \left( t\right) +\vartheta
^{^{\prime }}\left( t\right) \left( \zeta \left( t\right) -\zeta \left(
-t\right) \right)  \label{14}
\end{equation}%
it is clear that%
\begin{equation*}
\sum_{i=1}^{\infty }\frac{q_{i}\left( x\right) }{\gamma _{\mu }\left(
i\right) }\left( i+2\mu \theta _{i}\right) t^{i-1}=xQ\left( t\right) e_{\mu
}\left( xt\right) +e_{\mu }\left( -xt\right) \left( \Lambda _{\mu }Q\right)
\left( t\right) +Q^{^{\prime }}\left( t\right) \left[ e_{\mu }\left(
xt\right) -e_{\mu }\left( -xt\right) \right] \text{.}
\end{equation*}%
To achieve the third statement in $\left( \ref{12}\right) $ it is enough
again to apply the Dunkl operator $\Lambda _{\mu }$ to the both side of
above equality. Similarly, by virtue of $\left( \ref{13}\right) $ and $%
\left( \ref{14}\right) $, we conclude that%
\begin{eqnarray*}
\sum_{i=2}^{\infty }\frac{q_{i}\left( x\right) }{\gamma _{\mu }\left(
i\right) }\left( i+2\mu \theta _{i}\right) \left( i-1+2\mu \theta
_{i-1}\right) t^{i-2} &=&x^{2}Q\left( t\right) e_{\mu }\left( xt\right)
+xe_{\mu }\left( -xt\right) \left( \Lambda _{\mu }Q\right) \left( t\right) \\
&&+xQ^{^{\prime }}\left( t\right) \left[ e_{\mu }\left( xt\right) -e_{\mu
}\left( -xt\right) \right] \\
&&-xe_{\mu }\left( -xt\right) \left( \Lambda _{\mu }Q\right) \left( t\right)
+e_{\mu }\left( xt\right) \left( \Lambda _{\mu }^{2}Q\right) \left( t\right)
\\
&&+\left[ e_{\mu }\left( -xt\right) -e_{\mu }\left( xt\right) \right] \left(
\Lambda _{\mu }Q\right) ^{^{\prime }}\left( t\right) \\
&&+\left[ xe_{\mu }\left( xt\right) +xe_{\mu }\left( -xt\right) \right]
Q^{^{\prime }}\left( t\right) \\
&&+\left[ e_{\mu }\left( -xt\right) -e_{\mu }\left( xt\right) \right] \left(
\Lambda _{\mu }Q^{^{\prime }}\right) \left( t\right) \\
&&+2\left[ e_{\mu }\left( xt\right) -e_{\mu }\left( -xt\right) \right]
Q^{^{^{\prime \prime }}}\left( t\right) \text{.}
\end{eqnarray*}%
If we replace $x$ with $nx$ and $t$ with $1$ in the above expressions, these
equalities enable us to obtain the proof of lemma.
\end{proof}

\begin{theorem}
Let $f$ be a real valued continuous function on $\left[ 0,\infty \right) $
with%
\begin{equation*}
\frac{f\left( x\right) }{1+x^{2}}\rightarrow A\in
\mathbb{R}
\text{, }\left( x\rightarrow \infty \right) \text{.}
\end{equation*}%
Then the sequence $\left\{ K_{n}^{\mu }\left( f;.\right) \right\} _{n\geq 1}$
does converge uniformly to the $f$ function on $\left[ a,b\right] $ for
every $0\leq a<b<\infty $.
\end{theorem}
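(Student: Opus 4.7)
The plan is to invoke a Korovkin-type theorem for positive linear operators acting on functions defined on the unbounded interval $[0,\infty)$ with polynomial growth, for instance the classical theorem of Gadjiev with weight $\rho(x) = 1+x^2$ (which covers exactly the growth hypothesis imposed on $f$). By Lemma~1 the operators $K_n^\mu$ are positive and linear, so it is enough to verify the three moment conditions
\begin{equation*}
K_n^\mu(\xi^j;x) \longrightarrow x^j \quad (j=0,1,2)
\end{equation*}
uniformly on $[a,b]$ as $n \to \infty$, together with control on the weighted norm $K_n^\mu(1+\xi^2;x)/(1+x^2)$ for $x \in [0,\infty)$.

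First I would handle $j=0$, which is immediate from Lemma~2 since $K_n^\mu(1;x) = 1$ identically. For $j=1$ and $j=2$, I would rewrite the expressions in $(\ref{12})$ as
\begin{equation*}
K_n^\mu(\xi;x) - x = \frac{Q'(1)}{Q(1)}\cdot\frac{1}{n} + R_n^{(1)}(x), \qquad K_n^\mu(\xi^2;x) - x^2 = \frac{C_1(\mu,Q)}{n}\,x + R_n^{(2)}(x),
\end{equation*}
where each $R_n^{(j)}(x)$ collects the terms that carry the ratio $\Phi_n(x) := e_\mu(-nx)/e_\mu(nx)$ together with the explicit factors $1/n$ or $1/n^2$. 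The $x$-free leading contributions already tend to $0$ at the desired rate; what remains is to bound the tail terms uniformly in $x$.

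The key step — and the only real obstacle — is establishing that $\Phi_n(x)$ stays bounded uniformly on every compact subset of $[0,\infty)$. Because all Taylor coefficients $1/\gamma_\mu(i)$ of $e_\mu$ are positive for $\mu \geq 0$, the triangle inequality gives $|e_\mu(-nx)| \leq e_\mu(nx)$ for $x \geq 0$, so $|\Phi_n(x)| \leq 1$ on the whole half-line; combined with the $1/n$ and $1/n^2$ factors and the boundedness of $x \in [a,b]$, this immediately yields $R_n^{(j)}(x) \to 0$ uniformly on $[a,b]$. Thus each moment converges uniformly on $[a,b]$ to the required limit. (A sharper asymptotic $\Phi_n(x) \to 0$ when $x>0$ could be extracted from the entire-function structure of $e_\mu$, but is not needed here.)

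Finally, the growth estimate of $K_n^\mu(1+\xi^2;x)$ follows from Lemma~2 by the same kind of bound, so that $K_n^\mu(1+\xi^2;x) \leq M(1+x^2)$ for some constant $M$ independent of $n$ and $x$. All hypotheses of Gadjiev's weighted Korovkin theorem are therefore satisfied, and we conclude that $K_n^\mu(f;\cdot)$ converges uniformly to $f$ on every interval $[a,b]\subset [0,\infty)$ for every continuous $f$ obeying the prescribed polynomial growth, which is the claim of the theorem.
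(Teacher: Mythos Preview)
Your proposal is correct and follows essentially the same route as the paper: both arguments reduce the theorem to a Korovkin-type principle after verifying from Lemma~2 that $K_n^\mu(\xi^j;x)\to x^j$ uniformly on $[a,b]$ for $j=0,1,2$. The paper simply invokes the universal Korovkin-type property from Altomare~\cite{16} without spelling out the moment estimates; you instead cite Gadjiev's weighted version and add the explicit bound $|e_\mu(-nx)/e_\mu(nx)|\le 1$ together with the growth control $K_n^\mu(1+\xi^2;x)\le M(1+x^2)$, which supplies the details the paper omits but does not change the underlying strategy.
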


\begin{proof}
The proof of this fact is the substance of Korovkin-type property which is
given by Altomare \cite{16}. The formula obtained in Lemma 2 allows us to get%
\begin{equation*}
K_{n}^{\mu }\left( t^{i};x\right) \rightrightarrows x^{i}\text{, }i=0,1,2,
\end{equation*}%
on every interval $\left[ a,b\right] \subset \left[ 0,\infty \right) $.
Returning now to the universal Korovkin-type property, the theorem is
herewith proved.
\end{proof}

We shall now give a lemma which will be useful later. By combining relations
in $\left( \ref{12}\right) $, we arrive at the following assertion:

For every natural number $n$, we define two notations for convenience as
follows:%
\begin{equation*}
\Omega _{1}:=K_{n}^{\mu }\left( \xi -x;x\right) \text{ \ \ \ and \ \ \ }%
\Omega _{2}:=K_{n}^{\mu }\left( \left( \xi -x\right) ^{2};x\right) \text{.}
\end{equation*}

\begin{lemma}
For $K_{n}^{\mu }$ operators, the following relations%
\begin{eqnarray*}
\Omega _{1} &=&\frac{\left( e_{\mu }\left( nx\right) -e_{\mu }\left(
-nx\right) \right) Q^{^{\prime }}\left( 1\right) +e_{\mu }\left( -nx\right)
\left( \Lambda _{\mu }Q\right) \left( 1\right) }{Q\left( 1\right) ne_{\mu
}\left( nx\right) }\text{,} \\
\Omega _{2} &=&\left( 1+\frac{2e_{\mu }\left( -nx\right) \left[ \mu Q\left(
-1\right) +Q^{^{\prime }}\left( 1\right) -\left( \Lambda _{\mu }Q\right)
\left( 1\right) \right] }{Q\left( 1\right) e_{\mu }\left( nx\right) }\right)
\frac{x}{n} \\
&&+\frac{e_{\mu }\left( -nx\right) \left( \Lambda _{\mu }Q\right) \left(
1\right) }{Q\left( 1\right) n^{2}e_{\mu }\left( nx\right) } \\
&&+\frac{\left[ 2Q^{^{^{\prime \prime }}}\left( 1\right) -\left( \Lambda
_{\mu }Q\right) ^{^{\prime }}\left( 1\right) -\left( \Lambda _{\mu
}Q^{^{\prime }}\right) \left( 1\right) +Q^{^{\prime }}\left( 1\right) -2\mu
Q^{^{\prime }}\left( -1\right) \right] \left( e_{\mu }\left( nx\right)
-e_{\mu }\left( -nx\right) \right) }{Q\left( 1\right) n^{2}e_{\mu }\left(
nx\right) } \\
&&+\frac{\left( \Lambda _{\mu }^{2}Q\right) \left( 1\right) +2\mu \left(
\Lambda _{\mu }Q\right) \left( -1\right) }{Q\left( 1\right) n^{2}}
\end{eqnarray*}%
can be expressed.
\end{lemma}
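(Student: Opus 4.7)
The plan is to reduce $\Omega_1$ and $\Omega_2$ to the three moments $K_n^\mu(1;x)$, $K_n^\mu(\xi;x)$, $K_n^\mu(\xi^2;x)$ established in Lemma 2, and then simplify algebraically. By linearity,
\begin{align*}
\Omega_1 &= K_n^\mu(\xi;x) - x\,K_n^\mu(1;x),\\
\Omega_2 &= K_n^\mu(\xi^2;x) - 2x\,K_n^\mu(\xi;x) + x^2\,K_n^\mu(1;x),
\end{align*}
so no further manipulation of the Dunkl generating function is needed; the argument becomes purely one of substitution and simplification.

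The formula for $\Omega_1$ is essentially free: since $K_n^\mu(1;x)=1$, the leading $x$ in the expression for $K_n^\mu(\xi;x)$ in $(\ref{12})$ cancels $x\cdot K_n^\mu(1;x)$, leaving exactly the quoted right-hand side. For $\Omega_2$ I would then separate the computation by order in $1/n$. The $x^2$-contributions from the three terms combine as $x^2-2x^2+x^2=0$. The $O(1/n^2)$ pieces inside $K_n^\mu(\xi^2;x)$ contain no factor of $x$ that can interact with $-2x\,K_n^\mu(\xi;x)$, so they pass through verbatim and furnish the last three summands in the claimed formula. The only nontrivial work concerns the $O(1/n)$ cluster: the $x$-coefficient from $K_n^\mu(\xi^2;x)$ must be combined with $-2x$ times the non-$x$ remainder of $K_n^\mu(\xi;x)$ over the common denominator $Q(1)ne_\mu(nx)$. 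After expanding, the two $2Q'(1)e_\mu(nx)$ contributions cancel, leaving in the numerator
\begin{equation*}
xQ(1)e_\mu(nx)+2xe_\mu(-nx)\bigl[\mu Q(-1)+Q'(1)-(\Lambda_\mu Q)(1)\bigr],
\end{equation*}
which upon division by $Q(1)ne_\mu(nx)$ and factoring out $x/n$ yields precisely the first summand appearing in the stated expression for $\Omega_2$.

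The main obstacle is purely clerical: I have to keep the signs and the $e_\mu(\pm nx)$ factors correctly separated when collecting the $O(1/n)$ numerators, and in particular confirm that the two $2Q'(1)e_\mu(nx)$ contributions (one from the $2Q'(1)+Q(1)$ term in $K_n^\mu(\xi^2;x)$, the other from $-2x$ times the $Q'(1)e_\mu(nx)$ part of $K_n^\mu(\xi;x)$) cancel exactly, so that only the $e_\mu(-nx)$-weighted residue inside the displayed bracket remains. No new identities beyond those already deployed in Lemma 2 are required, hence the lemma is in effect an immediate corollary of Lemma 2 together with the linearity of $K_n^\mu$.
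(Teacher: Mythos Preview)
Your approach is correct and coincides with the paper's own: the lemma is stated there immediately after the sentence ``By combining relations in $(\ref{12})$, we arrive at the following assertion,'' i.e.\ it is treated as a direct consequence of Lemma~2 via the linear expansions $\Omega_1=K_n^\mu(\xi;x)-xK_n^\mu(1;x)$ and $\Omega_2=K_n^\mu(\xi^2;x)-2xK_n^\mu(\xi;x)+x^2K_n^\mu(1;x)$. Your explicit check of the $O(1/n)$ cancellation is more detailed than what the paper records but follows the same route.
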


Let $\delta \geq 0$ and $\mathcal{UC}\left[ 0,\infty \right) $ be the family
of uniformly continuous functions on positive real axis. Assume that $%
\varphi \in \mathcal{UC}\left[ 0,\infty \right) $. For $\zeta _{1},\zeta
_{2}\in \left[ 0,\infty \right) $ such that $\left\vert \zeta _{1}-\zeta
_{2}\right\vert \leq \delta $, the least upper bound of $\left\vert \varphi
\left( \zeta _{1}\right) -\varphi \left( \zeta _{2}\right) \right\vert $ is
represented by $w\left( \varphi ;\delta \right) $ which is called modulus of
continuity of $\varphi $.

We continue this section with the following useful relation between sequence
of operators $\left\{ K_{n}^{\mu }\left( f;.\right) \right\} _{n\geq 1}$
with modulus of continuity.

\begin{theorem}
Let $f$ be a uniformly continuous real valued function on $\left[ 0,\infty
\right) $. Then the following holds
\begin{equation*}
\left\vert K_{n}^{\mu }\left( f;x\right) -f\left( x\right) \right\vert \leq
\left( 1+\lambda _{n}\left( x\right) \right) w\left( f;\frac{1}{\sqrt{n}}%
\right) \text{,}
\end{equation*}%
where $\lambda _{n}\left( x\right) =\sqrt{n\Omega _{2}}$.
\end{theorem}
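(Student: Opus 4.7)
The plan is to use the standard modulus-of-continuity technique for positive linear operators, leveraging Lemma 1 (positivity and linearity), the first relation $K_n^\mu(1;x)=1$ from Lemma 2, and the explicit form of $\Omega_2 = K_n^\mu((\xi-x)^2;x)$ given in Lemma 3.

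First, I would exploit the normalization $K_n^\mu(1;x) = 1$ to rewrite
$$|K_n^\mu(f;x) - f(x)| = |K_n^\mu(f(\xi)-f(x);x)| \leq K_n^\mu(|f(\xi)-f(x)|;x),$$
where the inequality uses linearity together with positivity (Lemma 1). Next, for uniformly continuous $f$ and any $\delta > 0$, I would invoke the well-known property of the modulus of continuity,
$$|f(\xi) - f(x)| \leq \left(1 + \frac{|\xi-x|}{\delta}\right) w(f;\delta),$$
and apply $K_n^\mu$ to both sides. Using $K_n^\mu(1;x)=1$ once more yields
$$|K_n^\mu(f;x) - f(x)| \leq \left(1 + \frac{1}{\delta}\, K_n^\mu(|\xi-x|;x)\right) w(f;\delta).$$

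To eliminate the first-order absolute moment $K_n^\mu(|\xi-x|;x)$, I would invoke the Cauchy--Schwarz inequality for positive linear operators,
$$K_n^\mu(|\xi-x|;x) \leq \sqrt{K_n^\mu((\xi-x)^2;x)} \cdot \sqrt{K_n^\mu(1;x)} = \sqrt{\Omega_2}.$$
Substituting this bound and finally specializing the free parameter by choosing $\delta = 1/\sqrt{n}$, the factor $\sqrt{\Omega_2}/\delta$ becomes $\sqrt{n\Omega_2} = \lambda_n(x)$, which delivers the claimed estimate
$$|K_n^\mu(f;x) - f(x)| \leq (1 + \lambda_n(x))\, w\!\left(f;\tfrac{1}{\sqrt{n}}\right).$$

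There is no real obstacle here: the argument is essentially a template proof, and all the ingredients are already established. The only small subtlety is ensuring the Cauchy--Schwarz step is justified for our positive linear functional $\varphi \mapsto K_n^\mu(\varphi;x)$, but this follows from $K_n^\mu(1;x)=1$ combined with positivity. The explicit (and rather lengthy) expression for $\Omega_2$ in Lemma 3 is not needed in the argument itself; it is only used implicitly through $\lambda_n(x) = \sqrt{n\Omega_2}$.
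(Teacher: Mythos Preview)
Your proposal is correct and follows essentially the same approach as the paper: the paper writes out the series explicitly rather than in operator notation, but it performs the same three steps---bound $|f(\xi)-f(x)|$ via the modulus inequality $(1+|\xi-x|/\delta)\,w(f;\delta)$, control the first absolute moment by $\sqrt{\Omega_2}$ via the Schwarz inequality, and then set $\delta=1/\sqrt{n}$.
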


\begin{proof}
According to Lemma 2 and well known Schwarz inequality, it follows that%
\begin{eqnarray*}
\left\vert K_{n}^{\mu }\left( f;x\right) -f\left( x\right) \right\vert &\leq
&\frac{1}{Q\left( 1\right) e_{\mu }\left( nx\right) }\sum_{i=0}^{\infty }%
\frac{q_{i}\left( nx\right) }{\gamma _{\mu }\left( i\right) }\left\vert
f\left( \frac{i+2\mu \theta _{i}}{n}\right) -f\left( x\right) \right\vert \\
&\leq &\left\{ 1+\frac{1}{\delta }\frac{1}{Q\left( 1\right) e_{\mu }\left(
nx\right) }\sum_{i=0}^{\infty }\frac{q_{i}\left( nx\right) }{\gamma _{\mu
}\left( i\right) }\left\vert \frac{i+2\mu \theta _{i}}{n}-x\right\vert
\right\} w\left( f;\delta \right) \\
&\leq &\left\{ 1+\frac{1}{\delta }\sqrt{\Omega _{2}}\right\} w\left(
f;\delta \right) \\
&=&\left\{ 1+\frac{1}{\delta }\frac{1}{\sqrt{n}}\sqrt{n\Omega _{2}}\right\}
w\left( f;\delta \right) \text{.}
\end{eqnarray*}%
If we put $\delta =\frac{1}{\sqrt{n}}$, we arrive at desired result.
\end{proof}

Functions $\varphi $ satisfying the following inequality are referred to as H%
\"{o}lder functions with exponent $\beta >0$ on $\left[ 0,\infty \right) $
if there is a positive constant $M$ such that%
\begin{equation*}
\left\vert \varphi \left( \zeta _{1}\right) -\varphi \left( \zeta
_{2}\right) \right\vert \leq M\left\vert \zeta _{1}-\zeta _{2}\right\vert
^{\beta }
\end{equation*}%
for all $\zeta _{1},\zeta _{2}\in \left[ 0,\infty \right) $.

\begin{theorem}
If $\varphi $ satisfies a H\"{o}lder condition with exponent $\beta >0$,
then the following estimate
\begin{equation*}
\left\vert K_{n}^{\mu }\left( \varphi ;x\right) -\varphi \left( x\right)
\right\vert \leq M\left[ \Omega _{2}\right] ^{\frac{\beta }{2}}
\end{equation*}%
holds.
\end{theorem}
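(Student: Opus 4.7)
\medskip

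\noindent\textbf{Proof plan.} The plan is to exploit the linearity of $K_{n}^{\mu }$ together with the identity $K_{n}^{\mu}\left(1;x\right)=1$ from Lemma 2, so that the difference $K_{n}^{\mu}\left( \varphi ;x\right) -\varphi \left( x\right)$ can be written as a single convex combination of increments of $\varphi$. Concretely, I first write
\begin{equation*}
K_{n}^{\mu }\left( \varphi ;x\right) -\varphi \left( x\right) =\frac{1}{Q\left( 1\right) e_{\mu }\left( nx\right) }\sum_{i=0}^{\infty }\frac{q_{i}\left( nx\right) }{\gamma _{\mu }\left( i\right) }\left[ \varphi \left( \tfrac{i+2\mu \theta _{i}}{n}\right) -\varphi \left( x\right) \right]
\end{equation*}
and apply the triangle inequality together with the H\"{o}lder hypothesis on $\varphi $.

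The next step is to control the resulting expression
\begin{equation*}
\frac{M}{Q\left( 1\right) e_{\mu }\left( nx\right) }\sum_{i=0}^{\infty }\frac{q_{i}\left( nx\right) }{\gamma _{\mu }\left( i\right) }\left\vert \tfrac{i+2\mu \theta _{i}}{n}-x\right\vert ^{\beta }
\end{equation*}
by a power of $\Omega _{2}$. Since $K_{n}^{\mu}$ is positive with $K_{n}^{\mu}(1;x)=1$, the coefficients $\tfrac{q_{i}\left( nx\right) }{Q\left( 1\right) \gamma _{\mu }\left( i\right) e_{\mu }\left( nx\right) }$ form a probability mass, so the natural tool here is Jensen's inequality applied to the concave map $u\mapsto u^{\beta /2}$ for $0<\beta \leq 2$ (equivalently, H\"{o}lder's inequality with conjugate exponents $2/\beta $ and $2/(2-\beta )$). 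This yields
\begin{equation*}
\sum_{i}p_{i}\left\vert \tfrac{i+2\mu \theta _{i}}{n}-x\right\vert ^{\beta }\leq \left( \sum_{i}p_{i}\left\vert \tfrac{i+2\mu \theta _{i}}{n}-x\right\vert ^{2}\right) ^{\beta /2}=\left[ K_{n}^{\mu}\left( \left( \xi -x\right) ^{2};x\right) \right] ^{\beta /2}=\left[ \Omega _{2}\right] ^{\beta /2},
\end{equation*}
which combined with the multiplicative constant $M$ gives the claimed bound.

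The main obstacle is really just bookkeeping: one must make sure that the probability interpretation is legitimate (it is, by positivity of the $q_{i}(nx)$ which was built into the definition of the operators and into Lemma 1), and that the exponent condition is compatible with Jensen's inequality; for $\beta \in (0,2]$ this is immediate, and this is the regime in which the estimate is customarily stated. No further information about $Q$, $\mu $, or the auxiliary Dunkl identities is needed beyond what has already been packaged into $\Omega _{2}$ in Lemma 3, so the rest of the argument is a routine two-line computation once the Jensen step is in place.
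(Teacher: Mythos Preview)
Your proposal is correct and follows essentially the same route as the paper: reduce to $K_{n}^{\mu}(|\xi-x|^{\beta};x)$ via linearity, positivity, and the H\"older hypothesis, then bound this by $[\Omega_{2}]^{\beta/2}$ using H\"older's inequality with exponents $2/\beta$ and $2/(2-\beta)$ (the paper writes out the H\"older splitting explicitly, while you phrase the same step as Jensen for the concave map $u\mapsto u^{\beta/2}$). Your remark that the argument requires $\beta\le 2$ is a useful clarification that the paper leaves implicit.
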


\begin{proof}
Monotonicity property of $K_{n}^{\mu }$, combined with the Lemma 2, shows
that%
\begin{eqnarray*}
\left\vert K_{n}^{\mu }\left( \varphi ;x\right) -\varphi \left( x\right)
\right\vert  &=&\left\vert K_{n}^{\mu }\left( \varphi \left( \xi \right)
-\varphi \left( x\right) ;x\right) \right\vert  \\
&\leq &K_{n}^{\mu }\left( \left\vert \varphi \left( \xi \right) -\varphi
\left( x\right) \right\vert ;x\right)  \\
&\leq &MK_{n}^{\mu }\left( \left\vert \xi -x\right\vert ^{\beta };x\right)
\text{.}
\end{eqnarray*}%
Finally, from H\"{o}lder inequality we deduce the following expression
\begin{eqnarray*}
\frac{1}{Q\left( 1\right) e_{\mu }\left( nx\right) }\sum_{i=0}^{\infty }%
\frac{q_{i}\left( nx\right) }{\gamma _{\mu }\left( i\right) }\left\vert
\frac{i+2\mu \theta _{i}}{n}-x\right\vert ^{\beta } &=&\frac{1}{Q\left(
1\right) e_{\mu }\left( nx\right) } \\
&&\times \sum_{i=0}^{\infty }\left( \frac{q_{i}\left( nx\right) }{\gamma
_{\mu }\left( i\right) }\right) ^{\frac{2-\beta }{2}}\left( \frac{%
q_{i}\left( nx\right) }{\gamma _{\mu }\left( i\right) }\right) ^{\frac{\beta
}{2}}\left\vert \frac{i+2\mu \theta _{i}}{n}-x\right\vert ^{\beta } \\
&\leq &\frac{1}{Q\left( 1\right) e_{\mu }\left( nx\right) }\left[
\sum_{i=0}^{\infty }\frac{q_{i}\left( nx\right) }{\gamma _{\mu }\left(
i\right) }\right] ^{\frac{2-\beta }{2}} \\
&&\times \left[ \sum_{i=0}^{\infty }\frac{q_{i}\left( nx\right) }{\gamma
_{\mu }\left( i\right) }\left( \frac{i+2\mu \theta _{i}}{n}-x\right) ^{2}%
\right] ^{\frac{\beta }{2}} \\
&=&\left[ K_{n}^{\mu }\left( 1;x\right) \right] ^{\frac{2-\beta }{2}}\left[
\Omega _{2}\right] ^{\frac{\beta }{2}}\text{.}
\end{eqnarray*}%
So, the theorem is established.
\end{proof}

\begin{theorem}
Under the condition $\psi \in C\left[ 0,\infty \right) $, the relation
\begin{equation*}
\left\vert K_{n}^{\mu }\left( \psi ;x\right) -\psi \left( x\right)
\right\vert \leq \frac{3}{4}\left( 2+a+s^{2}\right) w_{2}\left( \psi
;s\right) +\frac{2s^{2}}{a}\left\Vert \psi \right\Vert \text{, }x\in \left[
0,a\right]
\end{equation*}%
holds, where $s=\sqrt[4]{\Omega _{2}}$ and $\left\Vert \psi \right\Vert
=\sup\limits_{x\in \left[ 0,\infty \right) }\left\vert \psi \left( x\right)
\right\vert $ and $w_{2}$ is second-order modulus of continuity.
\end{theorem}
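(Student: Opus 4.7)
The natural approach is the standard second-order smoothing/K-functional argument for positive linear operators. The plan is to attach to $\psi$ a $C^{2}$-approximant $\psi_{h}$ (e.g.\ the Zhuk--Steklov mean built from $\psi$ on $[0,a]$ with parameter $h\in(0,a/2]$) satisfying the three classical estimates
\[
\|\psi-\psi_{h}\|\leq \tfrac{3}{4}\,w_{2}(\psi;h),\qquad \|\psi_{h}^{\prime\prime}\|\leq \tfrac{3}{2}\,h^{-2}w_{2}(\psi;h),
\]
together with an intermediate-derivative bound of the form $\|\psi_{h}^{\prime}\|\leq \tfrac{2}{a}\|\psi\|+\tfrac{a}{2}\|\psi_{h}^{\prime\prime}\|$ valid on the interval $[0,a]$; these three ingredients are precisely what produce the three summands in the stated bound.

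Next, I would split the error in the usual way,
\[
|K_{n}^{\mu}(\psi;x)-\psi(x)|\leq |K_{n}^{\mu}(\psi-\psi_{h};x)|+|(\psi-\psi_{h})(x)|+|K_{n}^{\mu}(\psi_{h};x)-\psi_{h}(x)|,
\]
and use that $K_{n}^{\mu}$ is positive with $K_{n}^{\mu}(1;\cdot)=1$ (Lemma 1 and Lemma 2) to bound the first two summands by $\tfrac{3}{2}w_{2}(\psi;h)$. For the third summand I would Taylor-expand $\psi_{h}$ around $x$ with integral remainder,
\[
\psi_{h}(t)=\psi_{h}(x)+(t-x)\psi_{h}^{\prime}(x)+\int_{x}^{t}(t-u)\psi_{h}^{\prime\prime}(u)\,du,
\]
apply $K_{n}^{\mu}(\,\cdot\,;x)$, and invoke Lemma 4 to obtain
\[
|K_{n}^{\mu}(\psi_{h};x)-\psi_{h}(x)|\leq \|\psi_{h}^{\prime}\|\,|\Omega_{1}|+\tfrac{1}{2}\|\psi_{h}^{\prime\prime}\|\,\Omega_{2}.
\]
Since $K_{n}^{\mu}(\,\cdot\,;x)$ is a positive functional normalised by $1$, the Cauchy--Schwarz inequality gives $|\Omega_{1}|\leq \sqrt{\Omega_{2}}=s^{2}$.

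Finally, setting $h=s$ collapses $h^{-2}\Omega_{2}$ into $s^{2}$: the $\|\psi_{h}^{\prime\prime}\|\Omega_{2}$ contribution produces the $s^{2}$ piece of the prefactor, the intermediate-derivative estimate contributes both the $a$ piece and the remainder term $\tfrac{2s^{2}}{a}\|\psi\|$, while the two smoothing summands supply the leading $2$ inside $(2+a+s^{2})$, with a uniform factor of $\tfrac{3}{4}$ surviving from the Zhuk estimates. The essential obstacle is not the structure of the argument, which is routine, but the sharp numerical bookkeeping: producing precisely the coefficient $\tfrac{3}{4}(2+a+s^{2})$ requires the Zhuk--Steklov construction (as opposed to a generic mollification), and the dependence on $a$ enters the proof only through the intermediate-derivative inequality on the finite interval $[0,a]$, which is exactly what forces the restriction $x\in[0,a]$ in the statement.
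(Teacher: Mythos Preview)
Your proposal is correct and follows essentially the same route as the paper: the paper also introduces the Zhuk--Steklov function $\psi_{s}$, splits the error as you do, bounds $|K_{n}^{\mu}(\psi_{s};x)-\psi_{s}(x)|$ by $\|\psi_{s}'\|\sqrt{\Omega_{2}}+\tfrac{1}{2}\|\psi_{s}''\|\Omega_{2}$ via Taylor and Cauchy--Schwarz, applies the Landau inequality on $[0,a]$ to control $\|\psi_{s}'\|$, and finally sets $s=\sqrt[4]{\Omega_{2}}$ to obtain the stated constants. The only cosmetic difference is that the paper writes $\sqrt{\Omega_{2}}$ directly rather than passing through $|\Omega_{1}|\le\sqrt{\Omega_{2}}$, but this is the same step.
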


\begin{proof}
Suppose that $a$ is a positive real number. By virtue of Lemma 2 and on the
basis of linearity property of $K_{n}^{\mu }$,
\begin{equation*}
K_{n}^{\mu }\left( \psi ;x\right) -\psi \left( x\right) =K_{n}^{\mu }\left(
\psi -\psi _{s};x\right) +K_{n}^{\mu }\left( \psi _{s};x\right) -\psi
_{s}\left( x\right) +\psi _{s}\left( x\right) -\psi \left( x\right)
\end{equation*}%
is verified. Taking into consideration the above equality we have, by virtue
of Lemma 2,
\begin{eqnarray*}
\left\vert K_{n}^{\mu }\left( \psi ;x\right) -\psi \left( x\right)
\right\vert  &\leq &K_{n}^{\mu }\left( \left\vert \psi -\psi _{s}\right\vert
;x\right) +\left\vert K_{n}^{\mu }\left( \psi _{s};x\right) -\psi _{s}\left(
x\right) \right\vert +\left\vert \psi _{s}\left( x\right) -\psi \left(
x\right) \right\vert  \\
&\leq &2\left\Vert \psi -\psi _{s}\right\Vert +\left\vert K_{n}^{\mu }\left(
\psi _{s};x\right) -\psi _{s}\left( x\right) \right\vert \text{.}
\end{eqnarray*}%
In accordance with the property of second-order Steklov function of the $%
\psi $ function \cite{17}, $\psi _{s}\in C^{2}\left[ 0,a\right] $ is
satisfied. Therefore, from the expansion of Taylor and inequality of
Cauchy-Schwarz, the right member of the above inequality%
\begin{equation}
\left\vert K_{n}^{\mu }\left( \psi _{s};x\right) -\psi _{s}\left( x\right)
\right\vert \leq \left\Vert \psi _{s}^{^{\prime }}\right\Vert \sqrt{\Omega
_{2}}+\frac{1}{2}\left\Vert \psi _{s}^{^{^{\prime \prime }}}\right\Vert
\Omega _{2}  \label{15}
\end{equation}%
can be written \cite{18}. As a consequence of Landau inequality we have the
following fact%
\begin{eqnarray*}
\left\Vert \psi _{s}^{^{\prime }}\right\Vert  &\leq &\frac{2}{a}\left\Vert
\psi _{s}\right\Vert +\frac{a}{2}\left\Vert \psi _{s}^{^{^{\prime \prime
}}}\right\Vert  \\
&\leq &\frac{2}{a}\left\Vert \psi \right\Vert +\frac{3a}{4}\frac{1}{s^{2}}%
w_{2}\left( f;s\right) \text{.}
\end{eqnarray*}%
Combining this formula with $\left( \ref{15}\right) $ gives us
\begin{eqnarray*}
\left\vert K_{n}^{\mu }\left( \psi ;x\right) -\psi \left( x\right)
\right\vert  &\leq &2\left\Vert \psi -\psi _{s}\right\Vert +\left\vert
K_{n}^{\mu }\left( \psi _{s};x\right) -\psi _{s}\left( x\right) \right\vert
\\
&\leq &\frac{3}{2}w_{2}\left( \psi ;s\right) +\left( \frac{2}{a}\left\Vert
\psi \right\Vert +\frac{3a}{4s^{2}}w_{2}\left( \psi ;s\right) \right) \sqrt{%
\Omega _{2}} \\
&&+\frac{3}{4s^{2}}w_{2}\left( \psi ;s\right) \Omega _{2}\text{.}
\end{eqnarray*}%
If we select $s=\sqrt[4]{\Omega _{2}}$, above inequality implies that%
\begin{equation*}
\left\vert K_{n}^{\mu }\left( \psi ;x\right) -\psi \left( x\right)
\right\vert \leq \frac{3}{4}\left( 2+a+s^{2}\right) w_{2}\left( \psi
;s\right) +\frac{2s^{2}}{a}\left\Vert \psi \right\Vert \text{.}
\end{equation*}%
This completes the proof.
\end{proof}

The following significant example is an explicit form of the $K_{n}^{\mu }$
operators.

\begin{example}
Polynomials having the following generating functions%
\begin{equation}
e^{a\omega ^{d+1}}e_{\mu }\left( x\omega \right) =\sum\limits_{i=0}^{\infty
}g_{i}^{\left( d+1\right) }\left( x,a,\mu \right) \frac{\omega ^{i}}{\gamma
_{\mu }\left( i\right) }  \label{16}
\end{equation}%
are called Gould-Hopper type polynomials \cite{15}. Gould-Hopper type
polynomials set $\left\{ g_{i}^{\left( d+1\right) }\left( x,a,\mu \right)
\right\} _{i=0}^{\infty }$ is a $d$ -orthogonal polynomial set \cite{15}.
From $\left( \ref{16}\right) $, it is clear that Gould-Hopper type
polynomials are the $\Lambda _{\mu }$ Appell polynomial set with%
\begin{equation*}
Q\left( \omega \right) =e^{a\omega ^{d+1}}\text{.}
\end{equation*}%
Under the assumption $a\geq 0$, $K_{n}^{\mu }$ operators which include
Gould-Hopper type polynomials are
\begin{equation*}
K_{n}^{\mu }\left( f;x\right) =\frac{e^{-a}}{e_{\mu }\left( nx\right) }%
\sum\limits_{i=0}^{\infty }\frac{g_{i}^{\left( d+1\right) }\left( nx,a,\mu
\right) }{\gamma _{\mu }\left( i\right) }f\left( \frac{i+2\mu \theta _{i}}{n}%
\right) \text{.}
\end{equation*}
\end{example}

\end{document}